\documentclass[12pt]{article}

\usepackage{CJK,CJKnumb,amsmath}
\usepackage{amsfonts}
\usepackage{amsthm}
\usepackage{geometry}
\usepackage{mathrsfs}
\usepackage{amsmath}
\usepackage{amssymb}
\usepackage{amsfonts}
\usepackage[percent]{overpic}
\usepackage{bm}
\usepackage[all]{xy}
\usepackage{graphicx}
\usepackage{subfigure}
\usepackage{latexsym}
\usepackage[colorlinks, linkcolor=blue, anchorcolor=blue, citecolor=blue]{hyperref}
\usepackage{epigraph}
\usepackage{hyperref}
\usepackage{fancyhdr}
\usepackage{comment}
\usepackage{tikz}
\usetikzlibrary{arrows.meta}
\usetikzlibrary{shapes.geometric}
\usetikzlibrary{arrows.meta,decorations.pathreplacing}

\usepackage{mathtools}
\mathtoolsset{showonlyrefs}

\numberwithin{equation}{section}
\usepackage{color}

\newtheorem{theorem}{Theorem}

\newtheorem{que}{Question}

\newtheorem{lemma}{Lemma}[section]
\theoremstyle{remark}
\newtheorem{remark}{Remark}

\newtheorem*{ack}{Acknowledgement}

\def\re{\operatorname{Re}}

\def\meas{\operatorname{meas}}
\def\diam{\operatorname{diam}}
\def\dist{\operatorname{dist}}

\def\c{\operatorname{\mathbb C}}

\def\p{\operatorname{\mathcal{P}}}

\begin{document}
	\title{Dynamical (non-)recurrence of escaping exponential maps}
	\author{Weiwei Cui, Jiaxing Huang and Jun Wang}
	\date{}
	\maketitle
	
\begin{abstract}
We show that exponential maps could have complicated measurable behaviours by	 giving two examples in the exponential family both with singular value escaping to infinity ``slowly", one of which is recurrent (in the sense that every positive measure set will return to itself infinitely many times), while the other is non-recurrent. This also complements earlier results of Lyubich, Rees, Urba\'nski-Zdunik, and Hemke. We also give an example of exponential maps with singular value escaping to infinity arbitrarily slow, which is a parametric analog of a result of Rempe.
\end{abstract}
	
\section{Introduction and main results}

The exponential family
\[\left\{f_{\lambda}(z)=\lambda e^z:\,\lambda\in\c\setminus\{0\}\right\}\]
is one of the most well-studied families in transcendental dynamics. It has only one singular value $0$. The singular behavior usually determines the global dynamics of the maps. In this note, we continue our study of measurable dynamics of exponential maps, particularly focusing on escaping exponential maps.  Recall that an exponential map $f_{\lambda}$ is called \emph{escaping} if $f_{\lambda}^n(0)\to\infty$ as $n\to\infty.$ In this case, we also say that $\lambda$ is an escaping parameter.

A directly related concept in this paper is dynamical recurrence. Say that $f_\lambda$ is \emph{dynamically non-recurrent} if there exists a set $A\subset\mathbb C$ of positive Lebesgue measure such that
\[
   A\cap f_\lambda^n(A)=\emptyset \qquad\text{for every }\,n\geq 1.
\]
Our main goal is to investigate the relationship between the escape behavior of the singular value and dynamical recurrence by constructing several examples.

Lyubich and independently Rees proved that $f_1(z)=e^z$ is dynamically non-recurrent \cite{lyubich15, rees5} .  More precisely, for Lebesgue almost every point in the plane, its $\omega$-limit set is the postsingular set of $f_1$, together with $\infty$.  This result was extended by Urba\'nski and Zdunik to escaping exponential maps satisfying a sector condition \cite[Corollary 5.4]{urbanski3}, and by Hemke to maps satisfying a parabola condition (see, e.g. \cite{hemke1}). In all of these results we mentioned, the singular orbit escapes to infinity at iterated exponential speeds. We first present some examples of ``slowly" escaping exponential maps which are also dynamically non-recurrent.

\begin{theorem}\label{thm1}
There exists a ``slowly" escaping exponential map which is dynamically non-recurrent.
\end{theorem}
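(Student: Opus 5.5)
The plan is to build the parameter $\lambda$ by a nested-interval (shrinking-target) construction in parameter space, and then to prove non-recurrence by a Lyubich--Rees type argument: almost every orbit accumulates exactly on the postsingular set $P=\{f_\lambda^n(0)\}_{n\ge0}$ together with $\infty$.

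\textbf{Choice of slow escape and construction of $\lambda$.} Let $s_n=f_\lambda^n(0)$. We aim for $|s_{n+1}|$ to grow only polynomially or exponentially in $|s_n|$, not like an iterated exponential, for instance $\operatorname{Re} s_{n+1}\approx \operatorname{Re}s_n+\log$-type increments. To arrange this, the singular orbit should pass close to the imaginary direction, so that $\operatorname{Re}s_n$ stays moderate while $|\operatorname{Im} s_n|$ grows slowly. The tool is the standard parameter-space construction through the hairs/dynamic rays of exponential parameter space (Schleicher--Zimmer). At each stage we prescribe a tube (a thin box) $B_n$ in which $s_n$ must lie. Since $\lambda\mapsto f_\lambda^n(0)$ has derivative comparable to $\prod_{k<n}|s_k|$ along escaping parameters, the set of $\lambda$ with $s_n\in B_n$ contains a small topological disk inside the previous one. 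Intersecting these disks yields $\lambda$ with $s_n\in B_n$ for all $n$. The $B_n$ are placed along a curve with $|s_n|\to\infty$ slowly, and they also lie in a region where $|f_\lambda'(s_n)|=|s_{n+1}|\ge 2$, which keeps the orbit uniformly expanding.

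\textbf{Non-recurrence.} We take $A$ to be a small disk around a point far from $P$, for example $D(w,r)$ with $w$ deep in the left half-plane, or in a horizontal strip avoiding $P$. The key lemma is that for Lebesgue-a.e.\ $z$, $\omega(z)\subset P\cup\{\infty\}$. This follows the Rees/Lyubich scheme. Points whose orbits return infinitely often to a fixed compact $K$ disjoint from $\overline P$ form a null set, because univalent inverse branches along such orbits exist on a definite neighbourhood of $K$. Koebe distortion together with the expansion $|(f^n)'|\to\infty$ (valid since the Julia set is $\mathbb C$ for escaping $\lambda$) then gives density points of the complement in every return, so Lebesgue density yields measure zero. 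Next, points escaping to infinity outside a neighbourhood of $P$ also have measure zero, by the McMullen-type estimate that for exponential maps the escaping set within the half-plane $\operatorname{Re}z>R$ has small density. Given this, choose $A\subset$ a compact set $K$ with $\overline{K}\cap \overline P=\emptyset$. Then $A'=\{z\in A: f^n(z)\notin A \text{ for all } n\ge1\}$ has full measure in $A$, so $A'\cap f^n(A')=\emptyset$.

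\textbf{Main obstacle.} The delicate step is the Koebe/expansion argument under slow escape. With iterated-exponential escape, the postsingular points are so sparse that inverse branches extend to huge disks. With slow escape, $P$ is denser near infinity, so we must control the hyperbolic geometry of $\mathbb C\setminus \overline P$. I would first try to choose the $B_n$ so that $\operatorname{dist}(s_{n+1},P\setminus\{s_{n+1}\})\ge c|s_{n+1}|^{\epsilon}$, and prove a uniform lower bound on the hyperbolic expansion $\|Df\|_{\mathbb C\setminus\overline P}\ge \kappa>1$ along orbits spending long times near infinity. This is the analogue of the sector or parabola condition of Urba\'nski--Zdunik and Hemke, weakened to slow growth by exploiting the freedom in placing $B_n$.
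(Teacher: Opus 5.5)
The parameter construction is plausible (the paper's Section 4 does essentially this), but the non-recurrence step has a genuine gap.

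You claim that points returning infinitely often to a compact $K$ with $K\cap\overline{P}=\emptyset$ form a null set ``by Koebe distortion, expansion and Lebesgue density''. This does not follow. Pulling back from returns to $K$ at a density point only shows that if this set has positive measure, then it has full measure in some disk. That is exactly Bock's dichotomy (Lemma~\ref{lem:bock-dichotomy}), not a contradiction. The ``density points of the complement in every return'' are precisely what has to be proved.

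A quick test shows the argument proves too much. It uses only univalent branches near $K$ and $|(f^n)'|\to\infty$. Both hold for postsingularly finite exponential maps, which are dynamically recurrent by Bock. Likewise, McMullen's theorem that the escaping set is null holds for every exponential map, recurrent or not. In the Lyubich--Rees picture typical points do not escape anyway.

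What is actually needed is a summable, Borel--Cantelli type bound on the proportion of points that come back to $K$ after each excursion near $\overline{P}\cup\{\infty\}$. In the literature this comes only from the sector condition (Urba\'nski--Zdunik) or the parabola condition $\re s_n\ge|s_n|^{\delta}$ (Hemke). Since $|s_{n+1}|=|\lambda|e^{\re s_n}$, the parabola condition forces tower-type growth. So any genuinely slow orbit violates it, in particular your nearly vertical one, where $\re s_n=\log|s_{n+1}/\lambda|$ is only logarithmic in $|s_n|$. The fix in your last paragraph (uniform hyperbolic expansion on $\mathbb{C}\setminus\overline{P}$) cannot replace this estimate, since postsingularly finite maps are hyperbolically expanding too. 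Theorem~\ref{thm2} shows that slow escape can genuinely produce recurrence, so the proof stands or falls with the missing estimate.

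The paper avoids estimating anything for the slow limit map directly. It obtains $\lambda_\infty$ as a limit of parameters $\lambda_k$. For each $\lambda_k$, the singular orbit is trapped for $p_k$ iterates in $2\pi i$-translates of a square (this is where the slowness comes from) and then released along the real axis. Hence each $\lambda_k$ satisfies the parabola condition.

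For each $\lambda_k$, the finite-time version of Hemke's theorem (Lemma~\ref{hemkequa}) gives finitely many pieces. They fill all but a $\theta_k$-fraction of the previous generation, and their orbits up to a finite time stay in $V_{\lambda_k,R,\eta_k}$. Since only finitely many iterates are involved, these statements persist on $D(\lambda_k,r_k)\ni\lambda_\infty$. With $\sum\theta_k<\infty$ this yields positive-measure sets $A,B$ with $f_{\lambda_\infty}^n(B)\cap A=\emptyset$ for all $n$. That excludes alternative (a) of Bock's dichotomy. Alternative (b) then gives $\omega(z)\subset\p(f_{\lambda_\infty})\cup\{\infty\}$ almost everywhere, and a last-exit decomposition produces the wandering set.

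A small point on your final step: the set $A'$ of points of $A$ that never return to $A$ need not have full measure in $A$. It does have positive measure, because it contains $f^m(E_m)$ for any last-exit set $E_m$ of positive measure, and that suffices.
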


\smallskip
	
The aforementioned results and Theorem \ref{thm1} suggest dynamical non-recurrence of some escaping exponential maps. It is therefore natural to ask whether every escaping exponential map enjoys this property. We show that this is not the case by constructing an example with singular value escaping to $\infty$ ``slowly", while the map itself is dynamically recurrent. Therefore, compared with fast escaping maps, “slowly” escaping maps do not usually have uniform measurable dynamics.

\begin{theorem}\label{thm2}
There exists an escaping exponential map which is dynamically recurrent.
\end{theorem}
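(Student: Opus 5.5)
The plan is to construct an escaping parameter $\lambda$ whose singular orbit escapes so slowly that the postsingular set cannot act as an attractor for Lebesgue-typical orbits. Concretely, I would make the orbit of $0$ stay, for longer and longer blocks of time, near a fixed compact region before moving on. The mechanism for this is a shadowing construction. Fix an exponential map $f_{\lambda_0}$ with a repelling fixed point $p$ (or a repelling periodic cycle). By a nested-interval argument in parameter space, using the transversality $\frac{\partial}{\partial\lambda}f_\lambda^n(0)\neq 0$ (the standard Misiurewicz-type distortion estimates), choose $\lambda$ so that $f_\lambda^n(0)$ follows a prescribed itinerary. This itinerary spends $N_k$ iterates near the fixed point $p$, then makes an excursion to height about $R_k$, then returns. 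I would choose $R_k\to\infty$, but arrange the return points $f^{m_k}_\lambda(0)$ to have real parts tending to $+\infty$ slowly, with $N_k\to\infty$. In this way the orbit escapes while the time spent in bounded-distortion "almost parabolic" configurations grows. The nested-parameter argument I would carry out as in Rempe's slow-escape construction, transplanted to the parameter plane; this is presumably what the paper's later "arbitrarily slow" parametric result formalizes.

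The key step is to show recurrence in the Lebesgue sense, i.e. that for every positive-measure set $A$ there are infinitely many $n$ with $\meas(A\cap f_\lambda^{-n}(A))>0$. For this I would establish \emph{conservativity}: the set of points whose orbits stay in a compact region $K$ for a positive density of times has full measure, or at least every wandering set has measure zero. The approach is the standard one used by Lyubich and Rees, but with the opposite conclusion. Take a Lebesgue density point $z$ of $A$ and its orbit. Whenever $f^n(z)$ lands near a point that is far from the postsingular set $P(f_\lambda)$ at a definite scale, Koebe distortion pulls back a round disk of definite size around $f^n(z)$ univalently to a small neighbourhood of $z$, with bounded distortion. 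If this happens for infinitely many $n$ with $f^n(z)$ in a fixed compact set $K$, then $A$'s density is transported to positive density in a definite disk in $K$. Finitely many such disks cover $K$, so some disk $D$ receives positive density from $A$ along infinitely many times $n_j$. Comparing two times $n_i<n_j$ in the same disk gives $\meas(f^{n_i}(A)\cap f^{n_j}(A))>0$, and hence $A\cap f^{n_j-n_i}(A)\neq\emptyset$ up to measure zero. This is the desired recurrence.

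It therefore remains to show that a.e.\ $z$ returns infinitely often to $K\setminus U$, where $U$ is a neighbourhood of the postsingular set of fixed size. This is the main obstacle. Here the slow escape is used. Because the singular orbit moves to infinity, the postsingular set near any compact $K$ consists of finitely many points at any given stage. But during the long blocks where $f^n(0)$ sits near $p$, the set $P(f_\lambda)\cap K$ clusters near the cycle of $p$ and nowhere else. I would then show that the set of points whose orbits eventually avoid $K$ has measure zero. To do this, I would use the estimate of McMullen type: points whose real parts exceed $R$ for all large iterates form a set of zero measure, unless orbits are pulled near the singular orbit. I would also control the points that follow the singular orbit, showing their measure is zero via the extremely long stays near a repelling point, where nearby non-postsingular points are expelled at a definite exponential rate. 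The delicate quantitative part is choosing $N_k$ and $R_k$ compatibly. The excursions must be long enough to make $\lambda$ escaping, yet shadowing must cost only a summable amount of measure, so that a Borel--Cantelli argument yields infinitely many good returns to $K\setminus U$ for almost every point.
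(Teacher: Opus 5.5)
Your construction, as written, does not produce an escaping map. If in every block the singular orbit spends $N_k$ iterates near the fixed repelling point $p$, then $p\in\omega(0)$ and $f_\lambda^n(0)\not\to\infty$. This also contradicts your own requirement that the return points have real parts tending to $+\infty$. Likewise, for an escaping map $\p(f_\lambda)\cap K$ is a finite set of orbit points, so it cannot ``cluster near the cycle of $p$''. Any lingering of an escaping singular orbit must take place in regions that recede to infinity, as with the compact sets in the proof of Theorem~\ref{thm1}. Such lingering then has no direct bearing on what typical orbits do in your fixed compact set $K$.

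More seriously, the step you call the main obstacle is the entire theorem, and you supply no mechanism for it. Since $\mathcal{J}(f_\lambda)=\c$ for escaping $\lambda$, Bock's dichotomy (Lemma~\ref{lem:bock-dichotomy}) makes ``almost every orbit returns infinitely often to $K\setminus U$'' equivalent to dynamical recurrence. The McMullen-type fact does not separate the two alternatives: it holds for $e^z$, where nevertheless $\omega(z)=\p(f)\cup\{\infty\}$ for a.e.\ $z$. Nor can long stays of the singular orbit in an expanding compact region force recurrence by themselves, since the map of Theorem~\ref{thm1} has such stays and is non-recurrent. What decides the matter is a quantitative estimate, which you leave as ``choose $N_k$, $R_k$ compatibly''.

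The missing idea, which the paper takes from the Cui--Wang construction, has three parts.
\begin{itemize}
\item[(i)] Approximate $\lambda$ by post-singularly finite parameters $\lambda_k$, and carry a \emph{finite-time} recurrence estimate through the nested parameter disks. For every $w\in D(0,R)$ outside an $\eta_k$-neighbourhood of $\p(f_{\lambda_k})$ (a set of measure at most $\delta_k$), the pullbacks of $D(w,\eta_k/4)$ under $f_{\lambda_k}^j$, $j\le M_k$, cover $D(0,R)$ up to measure $\delta_k$. The components have diameter at most $\varepsilon_k\to 0$.
\item[(ii)] Because only $M_k$ iterates are involved, choosing $r_k$ after $M_k$ preserves this estimate, with bounded distortion, for all $\lambda'\in D(\lambda_k,r_k)$, and hence for the limit.
\item[(iii)] Take a set $B$ of positive measure with $B\cap f_\lambda^n(B)=\emptyset$ for all $n\ge M$, moved into $D(0,R)$ using the ergodicity proved by Cui--Wang. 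Chaining Lebesgue density points of $B$ then produces a portion of $B$ returning to $B$ after more than $M$ iterates, a contradiction.
\end{itemize}

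Finally, even granting the returns to $K\setminus U$, your last step is invalid. $\meas(f^{n_i}(A)\cap f^{n_j}(A))>0$ does not imply $A\cap f^{n_j-n_i}(A)\neq\emptyset$, because $f_\lambda$ is far from injective. The correct conclusion is that alternative (b) of Bock's dichotomy fails. Hence (a) holds, and a.e.\ point of $A$ returns to $A$ infinitely often.
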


\begin{remark}
Here we use the term ``dynamical recurrence" in order to make a distinction from the so-called \emph{non-recurrent} exponential map. By definition, an exponential map is non-recurrent (or singularly non-recurrent, to be more precise) if the \emph{singular value} is non-recurrent; see, for instance, \cite{aspenberg-cui}.
\end{remark}
	
\begin{remark}
Non-recurrent exponential maps may be dynamically recurrent. For instance, a result of Bock tells that post-singularly finite exponential maps (which are clearly non-recurrent) are dynamically recurrent \cite{bock}. It seems difficult to formulate a sufficient and necessary condition on the singular orbit (even in the non-recurrent setting) for dynamical recurrence or dynamical non-recurrence.
\end{remark}

	The method used in the construction of the above theorem also gives a parameter version of a result of Rempe concerning the existence of arbitrary escaping speed of escaping points in the Julia set; see, \cite[Theorem 1.4]{rempe12} and also \cite{rippon4}.
	
\begin{theorem}
Let $(r_n)$ be a sequence of positive real numbers so that $r_n\to\infty$ and $r_{n+1}\leq e^{r_n}+c$ for some $c>0$. Then there exists an escaping parameter $\lambda$ and some $n_0\in\mathbb{N}$ so that $|\re f_{\lambda}^{n}(0)-r_n|\leq \pi$ for $n\geq n_0$.
\end{theorem}

In other words, there are parameters with arbitrarily slowly escaping speed of the singular value.

	We say a few words about constructions in the above theorems. Theorem \ref{thm2} uses an example constructed in \cite{cui-wang}, where such function was constructed by approximation of post-singularly finite exponential maps and turns out to be ergodic. The construction in the Theorem \ref{thm1} uses different ideas. More precisely, the singular orbit of this example is obtained by welding ``initial" pieces of singular orbits of fast escaping exponential maps while keeping a ``slow" escaping speed. By fast escaping here we mean that these exponential maps can be chosen such that the singular value escapes to $\infty$ within a sector or a parabola (i.e., they satisfy the sector or parabola condition mentioned above).

\section{Proof of the Theorem \ref{thm1}: Slow escaping and dynamical non-recurrence}

Let $f_{\lambda}$ be an exponential map. The \emph{post-singular set} of $f_{\lambda}$ is defined as
\[\p(f_{\lambda}):=\overline{\bigcup_{j\geq 0}f_{\lambda}^{j}(0)}. \]
An exponential map $f_{\lambda}$ (or a parameter $\lambda$) is called \emph{non-recurrent} if $0\not\in\omega(0)$, and \emph{escaping} if $f_{\lambda}^{n}(0)\to\infty$ as $n\to\infty$. The purpose of this section is to find, through perturbations in the parameter space, an escaping exponential map  which is dynamically non-recurrent, but with the singular value tending to infinity under iterates very “slowly”.

The construction can actually be done by perturbing any non-recurrent parameters. For simplicity, we shall start with the escaping parameter $\lambda_0=1$, i.e., $f_{1}(z)=e^z$. Define, for each $n\in\mathbb{N}$,
\[
\xi_n(\lambda):=f_{\lambda}^{n}(0),
\]
which build a connection between the parameter and phase space. Consider, for sufficiently small $r_0>0$ and the parameter disk $D(\lambda_0, r_0)$. By perturbing in this parameter disk, we find a sequence of escaping parameters $\lambda_k$ with singular values tending to infinity sufficiently fast and satisfying the sector or parabola condition mentioned in the introduction, such that $\lambda_k\to\lambda$ with $\lambda$ being ``slowly" escaping. The most important feature of maps $f_{\lambda_k}$ is that the singular orbit of $f_{\lambda_k}$ is trapped in some left half-plane for quite a long time and then is released for free so that it can go to infinity at an iterated exponential speed.

The following lemma \cite[Lemmas 3.6 and 3.7]{aspenberg-cui}, establishing distortion and expansion properties of $\xi_n$, is useful in our construction and will be repeatedly used. It is stated for any non-recurrent parameters and thus applies particularly to the escaping ones. We first need some notations.

Note that any non-recurrent exponential map $f_{\lambda_0}$ is expanding on their post-singular set \cite[Corollary A]{benini3}, thus there is a neighborhood of $\p(f_{\lambda_0})$ such that $f_{\lambda_0}$ has expansion in this neighborhood. Moreover, there is a holomorphic motion $h: D(\lambda_0, r)\times \p(f_{\lambda_0})\to\c$ so that 
$$h_{\lambda}\circ f_{\lambda_0}=f_{\lambda}\circ h_{\lambda}$$
for $\lambda\in D(\lambda_0, r)$ and for $z\in \p(f_{\lambda_0})$; see \cite[Lemma 2.4]{aspenberg-cui}. Define  $\mu_j(\lambda)=h_{\lambda}(\xi_j(\lambda_0))$. By choosing $r$ small enough, we can take some number $\delta>0$ such that the set of points $z$ with $\dist(z, h_{\lambda}(\p(f_{\lambda_0})))<10\delta$ is contained in this neighborhood of expansion for all $\lambda\in D(\lambda_0, r)$.

\begin{lemma}\label{dislarge}
Let $\lambda$ be a non-recurrent parameter. For any $\varepsilon>0$ there exist $\delta>0$ and $r>0$ sufficiently small such that for any $\lambda_1,\lambda_2\in D(\lambda, r)$, if $|\xi_j(\lambda_i)-\mu_j(\lambda_i)|\leq \delta$ for $i=1,2$ and for $j\leq n$, then
\[
\left|\frac{\xi'_n(\lambda_1)}{\xi'_n(\lambda_2)}-1 \right|<\varepsilon.
\]
Moreover, there exists $S>0$ such that for all $r>0$ small enough one can find $N\in\mathbb{N}$ such that $\xi_{N}(D(\lambda, r))\supset D(\xi_{N}(\lambda),S/4)$ and $\diam\xi_{N}(D(\lambda, r))\geq S$.
\end{lemma}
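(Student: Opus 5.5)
The plan is to compare $\xi_n'$ with the derivative of the holomorphic motion along the post-singular orbit, and to control it through a recursion that turns into a convergent sum. The starting point is the identity $\xi_{n}(\lambda)=\lambda e^{\xi_{n-1}(\lambda)}=f_\lambda(\xi_{n-1}(\lambda))$. Differentiating gives
\[
\xi_n'(\lambda)=e^{\xi_{n-1}(\lambda)}+\xi_n(\lambda)\,\xi_{n-1}'(\lambda),
\qquad
\frac{\xi_n'(\lambda)}{\prod_{j=1}^{n-1}f_\lambda'(\xi_j(\lambda))}
=\xi_1'(\lambda)+\sum_{k=1}^{n-1}\frac{e^{\xi_{k}(\lambda)}}{\prod_{j=1}^{k}f'_\lambda(\xi_j(\lambda))}.
\]
Since $f_\lambda'(z)=f_\lambda(z)$, the $k$-th term equals $\lambda^{-1}\prod_{j=1}^{k-1}\xi_{j+1}(\lambda)^{-1}$, up to index conventions. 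I would therefore write $\xi_n'(\lambda)=(f_\lambda^{n-1})'(\xi_1(\lambda))\cdot T_n(\lambda)$, where $T_n$ is this sum.

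The first step is to show that $T_n$ converges uniformly and stays bounded away from $0$. The hypothesis $|\xi_j-\mu_j|\le\delta$ places each $\xi_j(\lambda_i)$ inside the $10\delta$-neighbourhood of $h_\lambda(\p(f_{\lambda_0}))$. On that neighbourhood $f_\lambda$ is uniformly expanding, so $|\prod_{j\le k}f_\lambda'(\xi_j)|\ge C\Lambda^k$ with $\Lambda>1$, and the series converges geometrically. To get a nonzero limit I would use the standard non-recurrent transversality argument: the limit of $T_n$ at $\lambda_0$ equals $\tfrac{d}{d\lambda}(\xi_1-\mu_1)$ after normalising, which is nonzero because $\lambda_0$ is non-recurrent (this is the Aspenberg–Cui transversality). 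By continuity in $\lambda$ the limit stays nonzero for small $r$. The ratio $T_n(\lambda_1)/T_n(\lambda_2)$ is then close to $1$, since the tails are small uniformly and finitely many terms vary continuously in $\lambda$.

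The second step is bounded distortion of the phase-space factor $\prod_{j=1}^{n-1}\xi_{j+1}(\lambda_1)/\xi_{j+1}(\lambda_2)$. Write it as $\exp\sum_j\bigl(\xi_j(\lambda_1)-\xi_j(\lambda_2)\bigr)$. Pulling back by the expansion and using the holomorphic motion estimate $|\mu_j(\lambda_1)-\mu_j(\lambda_2)|\lesssim r$, one gets $|\xi_j(\lambda_1)-\xi_j(\lambda_2)|\lesssim\Lambda^{-(n-j)}\delta+r$-type bounds. The sum is dominated by a geometric series, which is small once $\delta$ and $r$ are small. This is the main obstacle: the $\mu_j$ contributions must not accumulate linearly in $n$. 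I would handle this with the Koebe distortion theorem, applying it to inverse branches of $f_\lambda^{n-j}$ on discs of radius $10\delta$ around $\mu_n$, rather than with the crude additive bound.

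For the expansion statement, take $N$ to be the first time $\diam\xi_N(D(\lambda,r))$ exceeds a fixed $S$, with $S$ comparable to $\delta$. Such an $N$ exists because $|\xi_n'|\gtrsim\Lambda^n$ as long as the orbit stays $\delta$-close, so the diameter grows geometrically. Up to time $N-1$ the image has diameter less than $S$, hence stays within $\delta$ of $\mu_j$, so the distortion estimate applies on $D(\lambda,r)$. Koebe then gives that $\xi_N$ is nearly affine and univalent on $D(\lambda,r)$. One more expansion step multiplies the size by at most a bounded factor, so the image contains $D(\xi_N(\lambda),S/4)$ and has diameter at least $S$.
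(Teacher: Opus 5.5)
\textbf{There is a genuine gap, in your second step.} The paper does not prove this lemma; it quotes it from Aspenberg--Cui. Your first step is the standard route: factor $\xi_n'(\lambda)=(f_\lambda^{n-1})'(\xi_1(\lambda))\,T_n(\lambda)$ and use transversality $Q=(\xi_1-\mu_1)'(\lambda_0)\neq0$ to get $T_n(\lambda_1)/T_n(\lambda_2)\approx 1$. This is fine for large $n$; for small $n$ you also need $T_n(\lambda_0)\neq 0$.

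First, your identity is off. Since $\xi_{j+1}=\lambda e^{\xi_j}$,
\[
\prod_{j=1}^{n-1}\frac{\xi_{j+1}(\lambda_1)}{\xi_{j+1}(\lambda_2)}=\left(\frac{\lambda_1}{\lambda_2}\right)^{n-1}\exp\sum_{j=1}^{n-1}\bigl(\xi_j(\lambda_1)-\xi_j(\lambda_2)\bigr),
\]
so there is an extra factor whose logarithm grows linearly in $n$. More seriously, your geometric-series bound does control the $\delta$-errors $\xi_j(\lambda_i)/\mu_j(\lambda_i)$. What remains after removing them is $\prod_j\mu_j(\lambda_1)/\mu_j(\lambda_2)$. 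This is the ratio of derivatives of two \emph{different} maps, $f_{\lambda_1}^{n-1}$ and $f_{\lambda_2}^{n-1}$, along the moved post-singular orbit. For fixed $\lambda_1\neq\lambda_2$ it is not close to $1$ as $n\to\infty$. For example, if $\lambda_0$ is post-singularly finite it behaves like $(\rho(\lambda_1)/\rho(\lambda_2))^{n/q}$, where $\rho(\lambda)$ is the multiplier of the moved $q$-cycle. Koebe applied to inverse branches of $f_\lambda^{n-j}$ for one fixed $\lambda$ compares derivatives of a single map at nearby points, so it cannot touch this term. Hence, for a general non-recurrent parameter, no argument that treats $\lambda_1,\lambda_2$ as arbitrary points of $D(\lambda,r)$ can succeed.

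\textbf{The missing idea is that the hypothesis itself localizes the parameters at an exponentially small scale.} Put $x_j=\xi_j-\mu_j$. Then $x_{j+1}=\mu_{j+1}(e^{x_j}-1)$, so backward contraction along the expanding orbit from $|x_n(\lambda_i)|\le\delta$ gives $|x_j(\lambda_i)|\lesssim\delta\Lambda^{-(n-j)}$, and in particular $|x_1(\lambda_i)|\lesssim\delta\Lambda^{-n}$. Since $x_1(\lambda_0)=0$ and $x_1'(\lambda_0)=Q\neq0$, this forces $|\lambda_i-\lambda_0|\lesssim\delta\Lambda^{-n}/|Q|$. Now use your own bound $|\mu_j(\lambda_1)-\mu_j(\lambda_2)|\lesssim|\lambda_1-\lambda_2|$. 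Both $n|\lambda_1-\lambda_2|$ and $\sum_{j\le n}|\mu_j(\lambda_1)-\mu_j(\lambda_2)|$ are then $O(n\delta\Lambda^{-n})$. Together with $\sum_j|x_j(\lambda_i)|=O(\delta)$, the phase factor is $\exp(O(\delta))$. So transversality has to be used a second time, to localize the parameters, and not only to bound $T_n$ away from $0$.

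\textbf{A smaller error in the ``moreover'' part.} The claim that one more expansion step multiplies the size by at most a bounded factor is false when $\p(f_\lambda)$ is unbounded. That is exactly the escaping case in which the paper applies the lemma: $|f_\lambda'|=|f_\lambda|$ is unbounded along the orbit. So at time $N$ the hypothesis $|\xi_N-\mu_N|\le\delta$ may fail. You should apply the distortion estimate at time $N-1$ and handle the last step directly through $\xi_N=\lambda e^{\xi_{N-1}}$.
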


\subsection{Finding escaping parameter by perturbations}\label{sec:inductive}
As mentioned before, we start with $\lambda_0=1$ and the parameter disk $D(\lambda_0, r_0)$ for some small $r_0>0$. By Lemma \ref{dislarge}, given a number $S>0$, there exists $N_0\in\mathbb{N}$ depending on $r_0$ such that $\xi_{N_0}(D(\lambda_0, r_0))$ with $\diam\xi_{N_0}(D(\lambda_0, r_0))\geq S$ contains the disk $D(\xi_{N_0}(\lambda_0), S/4)$ and the function $\xi_{N_0}$ has bounded distortion.
	
For $z\in\c$ and $r>0$, let $Q(z, r)$ denote the square centered at $z$ of sidelength $r$. Note that $\xi_{n}(\lambda_0)$ lies on the positive real axis for all $n$. Therefore, by choosing $r>0$ sufficiently small we may achieve that 
$$\inf_{z\in D(\xi_{N_0}(\lambda_0), S/4)}|f_{\lambda_0}'(z)|\geq C:=16\pi/S.$$
This implies that $f_{\lambda_0}(D(\xi_{N_0}(\lambda_0), S/4))\supset Q(\xi_{N_0+1}(\lambda_0), \pi)=:Q_0$.
	
Put
$$x_0=\re \xi_{N_0+1}(\lambda_0).$$
Note that
$$f_{\lambda_0}(Q_0)=\left\{z\in\mathbb{C}:e^{x_0-\pi/2}<|z|<e^{x_0+\pi/2},\, \re z>0\right\}=:A_0,$$
which is a large half annulus in the right half-plane. Since $x_0$ can be taken sufficiently large (by taking $r_0$ sufficiently small), one can find finitely many $2k\pi i$ translates of $Q_0$ contained in $A_0$. Suppose that there are $M$ such squares and we denoted them by $Q_1,Q_2,\dots, Q_M$. Then it is clear that $f_{\lambda_0}(Q_j)=A_0$ holds for all $1\leq j\leq M$.
	
Now we have the freedom of choosing a parameter in $D(\lambda_0, r_0)$ such that the singular orbit of this parameter first follows that of $\lambda_0$ for $N_0+1$ iterates and then jumps among the squares $Q_j's$ in a ``random" way for staying there as many iterates as we want. More precisely, suppose that the total time spent in the squares is $p_0$, 
and let $R_1, R_2, \cdots,  R_{p_0}$ be the squares in $\{Q_1, \dots, Q_M\}$ for which we choose and the indices are arranged in the order they are visited. (Note that it is possible that $p_0>M$ so that some squares will be visited several times). Then we define, for $1\leq j\leq p_0$
$$Q'_j:=\xi_{N_0+j+1}^{-1}(R_j).$$
By construction, $\overline{Q'_{j+1}}\subset Q'_{j}$. So we see that if
$$\lambda\in \bigcap_{j=1}^{p_0}Q'_j,$$
then the singular orbit of $f_{\lambda}$, after first $N_0+1$ iterates staying closely with that of $f_{\lambda_0}$, will first get into $R_1$, and then $R_2$, and so forth.

Therefore, we have chosen many parameters whose singular orbits stay in some compact sets as long as we wish (by taking $p_0$ as large as possible). In the following, we need to select a parameter from them such that its singular orbit escapes exponentially fast afterward.  This is relatively easy. A similar argument can be found in \cite{qiu2}. Note that $f_{\lambda_0}(R_{p_0})=A_0$. Therefore, one can find a square $R_{p_0+1}\subset A_0$ centred at $w_1\in\mathbb{R}$ of sidelength $\pi$ such that $\min_{z\in R_{p_0+1}}\re(z)\geq e^{x_0}$. 
While $f_{\lambda_0}(R_{p_0+1})$ is again a half annulus, we see that one can find a square $R_{p_0+2}$ of sidelength $\pi$ centered at $w_2\in\mathbb{R}$ in this annulus such that $w_2>w_1$ and  $\min_{z\in R_{p_0+2}}\re(z)\geq e^{e^{x_0}}$. 
Repeating this procedure over and over again, one can find a parameter $\lambda_1\in D(\lambda_0, r_0)$ such that $f_{\lambda_1}^{n}(0)$ follows closely $f_{\lambda_0}^{n}(0)$ for $n\leq N_0+1$ 
and then $f_{\lambda_1}^{N_0+1+j}(0)\in R_{j}$ for $j\leq p_0$, 
and then $f_{\lambda_1}^{n}(0)\in R_{n-N_0}$ for all $n\geq N_0+p_0+2$, where $R_{p_0+j}, j\geq 1$ is the square centered at $w_j\in\mathbb{R}$ such that $f_{\lambda_0}(R_{p_0+j})$ is a half annulus containing $R_{p_0+j+1}$ and $\min_{z\in R_{p_0+j}}\re(z)\geq f_{\lambda_0}^{j}(x_0)$.

\smallskip

By construction, $f_{\lambda_1}^n(0)$ follows closely the positive real axis for all large $n$ and thus the singular value $0$ will escape to $\infty$ under iterates of $f_{\lambda_1}$ in some sector or parabola containing the positive real axis. The following lemma says that $f_{\lambda_1}$ is dynamically non-recurrent and actually gives stronger measurable behavior for such functions.
	
\medskip

\begin{lemma}\label{typical}
Let $\lambda$ be an escaping parameter. If 
\begin{equation}\label{parabola}
\re f_{\lambda}^{n}(0)\geq |f_{\lambda}^{n}(0)|^{\delta}
\end{equation}
for some $\delta>0$ and for all but finitely many $n$, then $\omega(z)=\p(f_{\lambda})\cup\{\infty\}$ for almost every $z\in\c$.
\end{lemma}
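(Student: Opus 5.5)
The plan is to follow the Lyubich--Rees strategy as extended by Urba\'nski--Zdunik and Hemke. The proof splits into two inclusions.

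\textbf{Step 1: $\omega(z)\subset\p(f_\lambda)\cup\{\infty\}$ for almost every $z$.} Since $\lambda$ is escaping, $f_\lambda$ has no attracting or parabolic cycles and no Siegel disks, so $J(f_\lambda)=\c$. By the result for exponential maps with bounded (or non-recurrent) singular orbit structure --- essentially Lyubich's/Rees' argument, or \cite{benini3} combined with a Koebe-type argument --- almost every $z$ has its orbit accumulating only on $\p(f_\lambda)\cup\{\infty\}$. Concretely, I would fix a compact set $K$ with $\dist(K,\p(f_\lambda))>0$ and show that the set of points visiting $K$ infinitely often has measure zero. Suppose instead that a density point $z$ of this set visits $K$ at times $n_k$. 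Then univalent pullbacks of a fixed disk around $f^{n_k}(z)$, which avoids $\p(f_\lambda)$, along the orbit have bounded distortion by Koebe. So the density of the set at $z$ transfers to a density at points of $K$. Combining this with expansion (the branches shrink because of hyperbolicity away from $\p(f_\lambda)$) gives a positive-measure forward-invariant-type set contradicting the fact that most of $K$ is mapped far left, where $|f'|$ is tiny, and then back. The standard way to finish is to show that a positive proportion of each disk around a point of $K$ eventually lands in a region not returning to $K$; the Lebesgue density theorem then gives the contradiction.

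\textbf{Step 2: escape of typical orbits, $\infty\in\omega(z)$, and accumulation on all of $\p(f_\lambda)$.} Here the parabola condition \eqref{parabola} enters. The typical mechanism is that points land in a left half-plane $\{\re w<-R\}$ and are then mapped very close to $0$; afterwards they shadow the singular orbit for a long time. I would quantify this: the set $L_R=\{\re w<-R\}$ has the property that a definite fraction (tending to $1$ as we go to the right in the strip structure) of every square of sidelength $\pi$ far out maps into $L_R$. So by density arguments almost every orbit enters $L_R$ for arbitrarily large $R$, hence comes within $|\lambda|e^{-R}$ of $0$. Following the singular orbit, $f^{n}(0)$ grows within the parabola, and the derivative along the orbit grows like $\prod |f^{j}(0)|$. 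The closeness $|\lambda|e^{-R}$ is then enough to shadow $f^j(0)$ for $j\le m(R)\to\infty$. This yields $\p(f_\lambda)\subset\omega(z)$ and $\infty\in\omega(z)$.

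The main obstacle is the shadowing estimate. One must show that the shadowing time $m(R)$ tends to infinity, and that the distortion of the relevant branches stays controlled while the orbit travels near $f^j(0)$, where the parabola condition $\re f^j(0)\ge|f^j(0)|^\delta$ guarantees $|f'|=|f^{j+1}(0)|$ is huge relative to the imaginary displacement. This is exactly Hemke's parabola estimate, and I would try to reproduce it by bounding $|f^{j}(w)-f^{j}(0)|\le 2^{-j}$ inductively using $|f(w)-f(u)|\le |f(u)|\,|w-u|\,e^{|w-u|}$ together with the growth of $|f^{j}(0)|$.
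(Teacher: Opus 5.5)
\textbf{There is a genuine gap, and it is in Step~1.} That step carries the whole content of the lemma, yet as written it never uses the parabola condition \eqref{parabola}. It relies only on $\lambda$ being escaping (so $0$ is non-recurrent and $\mathcal J(f_\lambda)=\mathbb{C}$), on Koebe distortion and on Lebesgue density. No argument of that kind can give $\omega(z)\subset\p(f_\lambda)\cup\{\infty\}$ almost everywhere, because that conclusion is false for general escaping parameters. The map of Theorem~\ref{thm2} is escaping and dynamically recurrent. Hence alternative (a) of Lemma~\ref{lem:bock-dichotomy} holds for it, and almost every orbit is dense. Put differently: if your Step~1 held for every escaping $\lambda$, the argument at the end of Section~2 would make every escaping exponential map dynamically non-recurrent.

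What your density-point argument actually yields is one half of Bock's dichotomy. If the set $X$ of points visiting $K$ infinitely often had positive measure, then $X$ would have density close to $1$ in disks of definite size near $K$. To get a contradiction you need the opposite lower bound: a definite proportion of such disks never returns to $K$. You state this in one sentence without proof, and the heuristic you offer for it does not work. Since $f_\lambda(K)$ is compact, nothing in $K$ is sent ``far left''. It is far-out squares that are partly sent to a left half-plane, hence near $0$. What happens to those pieces afterwards depends entirely on where $f_\lambda^j(0)$ is when the piece shadowing it has been re-expanded to definite size. If the singular orbit lingers in a compact region at that moment, as in the construction behind Theorem~\ref{thm2}, each such cycle sends a definite fraction of mass back into $K$.

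\textbf{The missing ingredient is Hemke's estimate.} The paper does not reprove it but cites it (\cite[Corollary 6.1]{hemke3}, \cite[Theorem 3.1]{hemke2}); its finite form is Lemma~\ref{hemkequa} and the remark after it. Under \eqref{parabola}, a far-out good square decomposes, up to a proportion $\varepsilon$ of its area, into pieces whose orbits stay in $V_{\lambda,R,\eta}$ until they are mapped conformally onto good squares. Iterating this with summable losses produces a positive-measure set whose orbits never enter $K$. The parabola condition is used exactly there. It guarantees that a piece thrown near $0$, shadowing $f_\lambda^j(0)$, reaches good-square size at a point still far to the right, so the decomposition can be restarted.

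\textbf{Step~2 is mis-targeted.} The ``main obstacle'' you identify there is not one. Once $0\in\omega(z)$, continuity of each $f_\lambda^j$ gives $f_\lambda^j(0)\in\omega(z)$ for all $j$. Closedness of $\omega(z)$ and $f_\lambda^j(0)\to\infty$ then give $\p(f_\lambda)\cup\{\infty\}\subset\omega(z)$, with no $2^{-j}$ shadowing estimate needed. The real work in Step~2 is showing $0\in\omega(z)$ for almost every $z$. That should be derived from Step~1, together with, e.g., the zero area of the escaping set, not from the unexplained ``density arguments'' you invoke.
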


\begin{remark}
This lemma was proved in the thesis of Hemke, see \cite[Corollary 6.1]{hemke3}, which is also a special case of \cite[Theorem 1.3]{hemke2}. We refer \eqref{parabola} as the ``parabola condition". In \cite[Corollary 5.4]{urbanski3}, this lemma was proved under the stronger condition that the singular orbit escapes in some sector.
\end{remark}

The above process can be repeated since $\lambda_1$ is an escaping parameter and thus  Lemma \ref{dislarge} can be used again.

Starting with the escaping parameter $\lambda_1$ we will construct another parameter $\lambda_2\in D(\lambda_1, r_1)$ for some $r_1<r_0$ whose singular orbit stays closely with that of $\lambda_1$ and then stay in some compact set again for quite a long time and then goes to infinity sufficiently fast under iterates. The construction is same to the above, we omit details.

Suppose that a sequence of parameters can be constructed which tends to some limiting parameter. We will show that the limiting parameter is also escaping, but the corresponding map is dynamically non-recurrent. An ingredient in this proof is a quantitative version of a result by Hemke \cite[Theorem 3.1]{hemke2}. Similar idea also appeared in the work of Rees \cite{rees5} and Lyubich \cite{lyubich15}, respectively.

\subsection{Limiting maps and dynamical non-recurrence}
For a parameter $\lambda$, some $R>0$ and $\eta>0$, define
\[V_{\lambda, R,\eta}=(\c\setminus \overline{D(0,R)})\ \cup  \left( \bigcup_{z\in\p(f_{\lambda})}D(z,\eta)\right).\]
For some number $0<c<1$ we say that a square $S$ is $\delta$-good for $\delta>0$ if
\[ \frac{c}{8}\left(\inf_{z\in c^{-1} S}|z|\right)^{-\delta}   \leq\diam S\leq \frac{c}{2}\left(\sup_{z\in c^{-1} S}|z|\right)^{-\delta}.\]
Here $c^{-1}S$ means a larger square $S'$ with the same center as that of $S$, but its sidelength is $c^{-1}$ times that of $S$. For fixed $c$, $S'\setminus\overline{S}$ has definite modulus and thus if a univalent function is defined on $S'$, Koebe's distortion applies on $S$. 

This definition uses that of Hemke. But the precise number $\delta$ will not concern us as we only use a finite version of Hemke's theorem, in the proof of which these squares are used to cover the whole tracts. So we fix $\delta$ throughout the rest of proof and only say \emph{good squares}. Let $\meas(A)$ denote the Lebesgue measure of $A\subset\c$.

\begin{lemma}\label{hemkequa}
Let $\lambda$ satisfy the parabola condition (\ref{parabola}). Let $R>0$, $\eta>0$ and $\varepsilon>0$ be given. Then there exists $M>R$ and a good square $S\subset \c\setminus D(0,M)$ such that the following holds: there exist finitely many, say $m$, disjoint subsets $V_i\subset S$, $p_i\in\mathbb{N}$ and good squares $S_i$ for $1\leq i\leq m$ such that
\begin{itemize}
\item[$(i)$] $f^{p_i}_{\lambda}: V_i\to S_i$ is conformal and onto,
\item[$(ii)$] $f^{j}_{\lambda}(V_i)\subset V_{\lambda, R,\eta}$ for $1\leq j\leq p_i$, and
\item[$(iii)$] $\meas \left(S\setminus\cup_{i=1}^{m} V_i\right)\leq \varepsilon \meas S$.
\end{itemize}
\end{lemma}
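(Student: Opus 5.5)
I will deduce Lemma \ref{hemkequa} from Lemma \ref{typical} (Hemke's theorem) through a density-point and Koebe argument. I will not reprove the full theorem; the aim is to extract a finite-time, quantitative version of it.

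\emph{Step 1: typical orbits visit $V_{\lambda,R,\eta}$ at many times.} By Lemma \ref{typical}, almost every $z$ has $\omega(z)=\p(f_\lambda)\cup\{\infty\}$. Hence $f^n_\lambda(z)\in V_{\lambda,R,\eta}$ for all $n\ge n(z)$. Since $\omega(z)\ni\infty$, there are also infinitely many $n$ with $f^n_\lambda(z)$ in a region of large modulus. That region is covered by good squares: Hemke covers the tracts and the neighbourhood of infinity by good squares, and these squares have moduli $\gtrsim$ any prescribed $M$.

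\emph{Step 2: choose the starting square $S$.} I fix $M>R$ large. I then pick a good square $S\subset\c\setminus D(0,M)$ inside the set where such orbits start. By Step 1, the set $E_N$ of $z\in S$ with $n(z)\le N$ has measure at least $(1-\varepsilon/2)\meas S$ once $N$ is large. To achieve $n(z)=0$ I rather use the escaping region directly. I choose $M$ so large that $\c\setminus\overline{D(0,M)}\subset V_{\lambda,R,\eta}$, which holds as soon as $M\ge R$. Then I restrict to points whose whole forward orbit from time $1$ stays in $V_{\lambda,R,\eta}$. By Lemma \ref{typical} together with a Lebesgue density argument, I can choose $S$ (shrinking it among good squares near a density point of that set, after pulling back by $f_\lambda^{n}$ for some $n$) so that this set occupies a proportion at least $1-\varepsilon/2$ of $S$.

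\emph{Step 3: choose the stopping times and construct the $V_i$.} For each such $z\in S$, I let $p(z)\ge1$ be the first time $f^{p(z)}_\lambda(z)$ lies in a good square $S'$ far out, for which the inverse branch of $f^{p(z)}_\lambda$ extends univalently to $c^{-1}S'$. The extension is available because $c^{-1}S'$ avoids $\p(f_\lambda)$, using the parabola condition (\ref{parabola}) and the smallness of good squares relative to $|z|^{-\delta}$. I set $V(z)$ to be the pullback component of $S'$ containing $z$. Since $c^{-1}S'$ is in $V_{\lambda,R,\eta}$, so are the intermediate images, giving (ii). Koebe's theorem makes $V(z)$ a near-square of bounded distortion. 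The sets $V(z)$ either are nested or disjoint, because the stopping time is the first one. A Vitali-type covering then selects finitely many disjoint $V_i$, which establishes (i), and these cover all but $\varepsilon\meas S$, which gives (iii).

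\emph{Main obstacle.} The hard point is Step 3: guaranteeing that the good square $S'$ at the stopping time admits univalent pullback with the pulled-back set contained in $S$. This needs the $V(z)$ to be small relative to $S$, so that few of them straddle $\partial S$. I would handle it as Hemke does: the expansion $|(f^p_\lambda)'|\to\infty$ along typical orbits makes $\diam V(z)\to0$, and I allow $p(z)$ to be taken large.
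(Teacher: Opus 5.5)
\emph{Comparison with the paper.} Your reduction of the finite statement to Lemma~\ref{typical} differs from what the paper indicates. The paper does not prove the lemma; it cites Hemke's Theorem~3.1 and sketches a direct construction. One pulls back good squares whenever the image of a piece is large. When a piece lands near the singular value, one follows the singular orbit and uses the expansion along it given by (\ref{parabola}) to regain large scale.

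\emph{Stopping times.} A reduction to Lemma~\ref{typical} is not absurd, and your Step~1 (a.e.\ orbit eventually stays in $V_{\lambda,R,\eta}$) is fine. But Step~3, which carries all the content, rests on claims that are false or unproved. The relation $\omega(z)\ni\infty$ only gives far-out iterates. Since $\omega(z)$ also contains every far-out point $f^k_\lambda(0)$, a typical orbit comes arbitrarily close to far-out post-singular points. At those times the good square $S'$ containing $f^n_\lambda(z)$ has $c^{-1}S'\cap\p(f_\lambda)\neq\emptyset$, and the inverse branch need not extend. So it is not true that $c^{-1}S'$ avoids $\p(f_\lambda)$ ``by the parabola condition and the smallness of good squares''. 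You must show that for a.e.\ $z$ there are infinitely many $n$ with $|f^n_\lambda(z)|$ large and $\dist(f^n_\lambda(z),\p(f_\lambda))$ large compared with $|f^n_\lambda(z)|^{-\delta}$. The mechanism is as follows: an orbit close to $\p(f_\lambda)$ shadows the singular orbit, which by (\ref{parabola}) is strongly expanding and very sparse far out. Hence the orbit separates from it at a far-out place, off a null set. This is precisely the second case of the paper's sketch, and Lemma~\ref{typical} does not give it to you.

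\emph{Property (ii).} This is a condition on the \emph{sets} $f^j_\lambda(V_i)$, $1\le j\le p_i$. You only know that the single orbit of $z$ lies in $V_{\lambda,R,\eta}$ and that the final set $c^{-1}S'$ does. The step ``since $c^{-1}S'$ is in $V_{\lambda,R,\eta}$, so are the intermediate images'' is a non sequitur. The fact $\diam V(z)\to0$ is irrelevant, because the intermediate pieces are not small (the last one is $S'$). For (ii) you need two things:
\begin{itemize}
\item the orbit of $z$ must lie in a smaller set $\{|w|>R'\}\cup\bigcup_{q\in\p(f_\lambda)}D(q,\eta')$ with $R'\gg R$ and $\eta'\ll\eta$;
\item the pieces need a uniform size bound, e.g.\ their hyperbolic diameter in $\mathbb{C}\setminus\p(f_\lambda)$ is bounded by Schwarz--Pick.
\end{itemize}

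\emph{Nesting and covering.} The claim ``nested or disjoint because the stopping time is the first one'' is false. The image of a good square under $f_\lambda$ is an annular sector, not a union of good squares. So first-entry pullbacks at different times can overlap without being nested. Moreover, ``first time'' conflicts with your later wish to take $p(z)$ large. The right tool is Vitali's covering theorem for the Koebe-regular family of pieces. It needs, for a.e.\ $z$, pieces $V(z)$ of arbitrarily small diameter, i.e.\ infinitely many good times along which $\diam V(z)\to0$. This must be proved. Your assertion that $|(f^p_\lambda)'|\to\infty$ along typical orbits is unjustified, since the factors $|f^j_\lambda(z)|$ in $(f^p_\lambda)'(z)$ are tiny whenever the orbit passes near $0$.

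\emph{Step~2 (minor).} A good square's size is fixed by its position, so you cannot shrink one around a density point. The pull-back by a branch of $f_\lambda^{-1}$ has to be made explicit and checked against (ii).
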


\begin{remark} We will not prove this, as this is a special version of Theorem 3.1 in \cite{hemke2}. But let us sketch the basic idea. Cover some right half-plane with disjoint good squares. For the square $S$, its image $f_{\lambda}(S)$ is either very large and thus cover many good squares or mapped very close to the singular value $0$. In the first case, pulling back these good squares to $S$ gives finitely many disjoint subsets of $S$ which satisfy $(i)$ and $(ii)$. To make $(iii)$ work for fixed $\varepsilon$, we continue to iterate the rest part of $S$. After some iterates, it is either reaching a large size which then cover some good squares or mapped to some left-half plane (which is the second case). For the second case, finitely many iterates will follow closely the singular orbit, but will reach to some large size due to the expansion on the singular orbit which is ensured by the parabola condition. As long as large size is realized, the above dichotomy will occur again. Continuing the above process until $(iii)$ is satisfied. Since $\varepsilon$ is prescribed, only finitely many disjoint subsets are needed.
\end{remark}

Fix $R>0$ and choose a sequence $(\theta_k)_{k\geq0}$ such that
\begin{equation}\label{eq:summable-losses}
    0<\theta_k<\frac12,
    \qquad
    \sum_{k=0}^{\infty}\theta_k<\infty.
\end{equation}

Suppose now that $\lambda_k$ has been constructed. Choose $\eta_k>0$, with
\[D_{R,k}:=D(0,R)\cap \bigcup_{z\in\p(f_{\lambda_k})}D(z,\eta_k)\]
we have
\[\meas\left(D_{R,k}\right)\leq \frac{1}{2^{k+2}}\meas D(0,R).\]
So the set
\[A:=D(0,R)\setminus \bigcup_{k=0}^{\infty}D_{R,k}\]
has positive measure.

Let now $S_0$ be a fixed good square for $\lambda_0=1$. 
Applying Lemma \ref{hemkequa} to the parameter $\lambda_k$, 
we obtain a finite family $\mathcal{A}_k=\{V_{k, i}: i=1,\dots, m_k\},$ and $p_{k,i}\in\mathbb{N}$ such that $$f_{\lambda_k}^{p_{k,i}}: V_{k, i}\to S_{k,i}$$ is conformal and onto where $S_{k,i}$ is a good square. Moreover, by Lemma \ref{hemkequa}(ii), first $p_{k,i}$ iterates of $V_{k, i}$ do not intersect with $A$.

Fix $\eta_k>0$. Now applying Lemma \ref{hemkequa} to each of $S_{k,i}$ for all $1\leq i\leq m_k$. If $V\subset S_{k,i}$ is the domain ensured by this lemma, then $f_{\lambda_k}^{-p_{k,i}}(V)$ is a subset of $V_{k,i}$. By the definition of good squares and Lemma \ref{hemkequa}(iii), we can make sure that the pullback of all $V$'s contained in $S_{k,i}$ has measure greater than $(1-\theta_k)\meas V_{k,i}$.

Now taking $r_k>0$ sufficiently small with $\overline{D}(\lambda_k,r_k)\subset D(\lambda_{k-1},r_{k-1})$ (i.e., making perturbations small enough) we may achieve that the above estimates hold for all parameters for $\lambda\in D(\lambda_k,r_k)$. Then we perform similar constructions to obtain a parameter $\lambda_{k+1}$ in $D(\lambda_k,r_k)$ to get similar estimates with $\theta_k$ replaced by $\theta_{k+1}$.

Induction of the above procedure gives a sequence $\{\lambda_k\}_{k\geq 0}$ of escaping parameters and $r_k$ so that $\overline{D}(\lambda_k, r_k)\subset D(\lambda_{k-1}, r_{k-1})$. Take
$$\lambda_{\infty}\in\bigcap_{k=0}^{\infty}\overline{D}(\lambda_k,r_k).$$
Then $\lambda_{\infty}$ is a ``slowly" escaping parameter in the sense that the singular orbit stays occasionally in a sequence of compact sets for as many iterates as one might wish, while \(\re f_{\lambda_\infty}^n(0)\to+\infty.\)

Put
\[\widehat{\mathcal{A}_k}:=\bigcup_{V\in \mathcal{A}_k} \overline{V}.\] It follows that $$\widehat{\mathcal{A}_0}=\{S_0\}, \qquad\widehat{\mathcal{A}_{k+1}}\subset \widehat{\mathcal{A}_k}$$ and 
$$\meas(\widehat{\mathcal{A}_{k+1}})\geq (1-\theta_k)\meas(\widehat{\mathcal{A}_k})\geq\prod_{i=0}^k(1-\theta_i)\meas(S_0).$$
Hence, set \[B:=\bigcap_k\, \widehat{\mathcal{A}_k},\] we have $$\meas(B)=\lim_{k\to\infty}\meas(\widehat{\mathcal{A}_{k}}).$$
Then by our choice of $\theta_k$, we obtain that 
\[\meas B \ge \prod_{i=0}^{\infty}(1-\theta_i)\meas(S_0) >0.\]
By the above construction of $A$ and $B$, we see that
\begin{equation}\label{eq:positive-area-avoidance}
    f_{\lambda_\infty}^n(B)\cap A=\emptyset
    \qquad\text{for every }n\geq0.
\end{equation}

Therefore, we have obtained two sets $A$ and $B$, both of which have positive measure. The following result of Bock \cite{bock} tells us the asymptotic behaviours of typical points for our limiting parameter.

\begin{lemma}\label{lem:bock-dichotomy}
For any non-constant function $f$ meromorphic in the complex plane, one of the following two cases must occur:
\begin{itemize}
\item[$(a)$] The Julia set of $f$ is either the whole complex plane, and for all $A\subset\c$ of positive measure, all $m\in\mathbb{N}$ and almost all $z$ there are infinitely many $n$ with $f^{mn}(z)\in A$;

\item[$(b)$] almost every forward orbit in the Julia set accumulates only on the post-singular set.
\end{itemize}
\end{lemma}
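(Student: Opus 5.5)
This lemma is Bock's dichotomy, and I would prove it by splitting on whether the Julia set is the whole plane. The tool throughout is the Lebesgue density theorem, pulled back along inverse branches of iterates that avoid the post-singular set $\mathcal{P}(f)$, with Koebe distortion controlling the shape of the pullbacks. So suppose $(b)$ fails. Then there is a set $E\subset J(f)$ of positive measure such that every $z\in E$ has an accumulation point $w\in\omega(z)$ with $\operatorname{dist}(w,\mathcal{P}(f))>0$. Passing to a positive-measure subset, fix a disk $D=D(w_0,2\rho)$ with $\overline{D}\cap\mathcal{P}(f)=\emptyset$ and infinitely many $n$ with $f^{n}(z)\in D(w_0,\rho)$ for every $z\in E$. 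The case where $w$ is $\infty$ or a pole needs a separate reduction: I would use a spherical disk, or first apply one iterate.

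Since $D$ misses the post-singular set, every branch $g$ of $f^{-n}$ on $D$ is univalent. Koebe's theorem then gives a uniform distortion bound on $D(w_0,\rho)$. Take $z\in E$ to be a density point of $E$. The pullbacks $g_n(D(w_0,\rho))$ along the orbit of $z$ have diameters tending to $0$; this uses that $z\in J(f)$ and normality arguments, and it is a standard but delicate step. By bounded distortion, the relative density of $E$ in $D(w_0,\rho)$ is close to the relative density of $f^{n}(E)$ in the image of the shrinking pullback. The same transfer works for any positive-measure set and for any of its density points.

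Now the two key consequences follow from this density transfer. First, if the Fatou set were nonempty, the same argument applied to density points of $E$ would show that $D(w_0,\rho)$ is almost entirely contained in $J(f)$. Iterating and using forward invariance, together with Montel's theorem for the blow-up property of $J$, this would force the Fatou component to have measure zero, which is a contradiction. Hence $J(f)=\mathbb{C}$. Second, take an arbitrary set $A$ of positive measure and $m\in\mathbb{N}$. Using the blow-up property, some branch of a preimage of a density point of $A$ lands in $D(w_0,\rho)$, which gives a positive proportion of $A$-preimages inside $D(w_0,\rho)$. Pulling back along the visits $f^{n_j}(z)\in D(w_0,\rho)$, the set of points that return to $A$ only finitely often has density strictly less than $1$ at each of its own density points. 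So it has measure zero. To pass to the subsequence $mn$, I would replace $f$ by $f^{m}$, which has the same Julia set and the same post-singular set up to finitely many images, and rerun the argument.

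The main obstacle is the shrinking of the pullbacks and the uniformity of the distortion when $f$ is meromorphic, because poles, $\infty$ and asymptotic values all interfere. My first attempt would be to work in the spherical metric and choose $w_0$ to be a non-exceptional point of $J$ that is not a pole and not in $\mathcal{P}(f)$. Since the paper only needs this result as stated by Bock, in the paper itself I would simply cite Bock \cite{bock}.
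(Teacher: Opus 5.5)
The paper does not prove this lemma; it simply quotes Bock, which is also what your last sentence proposes, so the paper has no argument to compare against. Your sketch of the underlying Lyubich--Bock argument uses the right scheme, and its first half is fine. At a density point $z$ of $E$, the pullbacks $D_j=g_j(D(w_0,\rho))$ shrink to $z$ with bounded shape, so the density of $E$ in $D_j$ tends to $1$. Koebe then transfers this to the relative measure of $f^{n_j}(E\cap D_j)\subset J(f)$ in $D(w_0,\rho)$ (your transfer sentence states this the wrong way round). Hence $J(f)$ has interior and $J(f)=\mathbb{C}$.

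The recurrence half, however, has a genuine gap. The claim that the transfer works ``for any positive-measure set and for any of its density points'' is false. The transfer needs $f^{n_j}(z)\in D(w_0,\rho)$ for infinitely many $j$, and you only know this on $E$, which has positive but not full measure. A density point of the set $Y$ of points visiting $A$ only finitely often may have an orbit accumulating only on $\mathcal{P}(f)\cup\{\infty\}$, and then there is nothing to pull back. What is missing is that $E'=\{z: f^n(z)\in D(w_0,\rho)\text{ for infinitely many } n\}$ has full measure in $\mathbb{C}$. This follows from the same tools:
\begin{itemize}
\item $E'$ is forward invariant, so the transfer at its own density points gives $\operatorname{meas}(D(w_0,\rho)\setminus E')=0$.
\item Forward invariance also gives $f^k(D(w_0,\rho))\setminus E'\subset f^k(D(w_0,\rho)\setminus E')$, which is a null set.
\item Since $D(w_0,\rho)\subset J(f)$, the sets $f^k(D(w_0,\rho))$, $k\geq 0$, cover $\mathbb{C}$ up to at most two points.
\end{itemize}

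Even granting this, your density argument does not show that $Y$ has density less than $1$ at its density points. Choose $k$ with $A'=D(w_0,\rho)\cap f^{-k}(A)$ of positive measure; this is possible by the covering property just used. By Koebe, the sets $g_j(A')$ fill a definite proportion $c>0$ of $D_j$. But their points are only known to visit $A$ once more, at time $n_j+k$, and may well lie in $Y$.

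You have to fix the time of the last visit. Put $Y_N=\{z: f^n(z)\notin A\text{ for all } n\geq N\}$, so that $Y=\bigcup_N Y_N$. Then $g_j(A')\cap Y_N=\emptyset$ once $n_j+k\geq N$, so no point of $Y_N\cap E'$ is a density point of $Y_N$. As $E'$ has full measure, $\operatorname{meas}(Y_N)=0$ for every $N$. Alternatively, use the forward invariant set $Y_0$, which is disjoint from $A$ and satisfies $f^N(Y_N)\subset Y_0$. If $Y_0$ had positive measure, the argument above would give it full measure, contradicting $\operatorname{meas}(A)>0$.

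A minor point concerns $w=\infty$. For entire $f$, such as the exponential maps in the paper, alternative (b) must be read as $\omega(z)\subset\mathcal{P}(f)\cup\{\infty\}$, which is how the paper uses it in the proof of Theorem~\ref{thm1}. Since $f$ omits $\infty$, no branch of $f^{-n}$ is defined near $\infty$. So the separate reduction you propose for $w=\infty$ is neither available nor needed.
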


We now apply Lemma~\ref{lem:bock-dichotomy} to the exponential map
\(f:=f_{\lambda_\infty}\). In this case,
Lemma~\ref{lem:bock-dichotomy}(a) cannot hold, as we have positive measurable sets $A$ and $B$ with the property \eqref{eq:positive-area-avoidance}. Therefore,
Lemma~\ref{lem:bock-dichotomy}\textup{(b)} must hold. Since $\lambda_{\infty}$ is escaping, we know that its Julia set $\mathcal J(f)$ is the whole
plane. Hence, we have
\begin{equation}\label{eq:ErE}
        \omega(z)\subset \p(f)\cup\{\infty\}    
\ \text{for almost every}\ z\in\mathcal J(f)=\c.
\end{equation}
Choose a closed disk
\[
    E\subset\mathbb C\setminus \p(f)
\]
of positive area. Then, almost every point
of $E$ returns to $E$ only finitely many times. Otherwise, the orbit
would have an accumulation point in the compact set $E$, contradicting
\eqref{eq:ErE}.

For $m\geq0$, define
\[
    E_m:=
    \left\{
        z\in E:
        m=\max\{n\geq0:f^n(z)\in E\}
    \right\}.
\]
The sets $E_m$ cover $E$ up to a set of measure zero, so some $E_m$ has positive
area.  Put
\[
    F:=f^m(E_m).
\]
Then, $\operatorname{meas}(F)>0$ and $F\subset E$. If
$w=f^m(z)\in F$ and $n\geq1$, the definition of $E_m$ gives
\[
    f^n(w)=f^{m+n}(z)\notin E.
\]
Since $F\subset E$, it follows that
\[
    F\cap f^n(F)=\emptyset
    \qquad\text{for every }n\geq1.
\]
Thus,
$f$ is dynamically non-recurrent. Together with the
slow escape property of $\lambda_\infty$, this completes the proof of
Theorem~\ref{thm1}.

\section{Proof of the Theorem \ref{thm2}: Slow escaping and dynamical recurrence}
	
The construction of an escaping exponential map in \cite[Theorem 1.1]{cui-wang} provides post-singularly finite parameters $\lambda_k$ converging to an escaping parameter $\lambda$.  The map $f_\lambda$ is ergodic.  We explain why the same construction yields dynamical recurrence and give a sketch of proof only.

\medskip

As in \cite[pages 3997-3998]{cui-wang}, for the constructed post-singularly finite parameter $\lambda_k$, taking $\eta_k>0$ small enough such that $\bigcup_{z\in\p(f_{\lambda_k})}D(z,\eta_k)$ is of Lebesgue measure $\leq \delta_k$. Take $R>0$. Then for any point $w\in D(0,R)\setminus \cup_{z\in\p(f_{\lambda_k})}D(z,\eta_k)$, one can find $M_k$ so that the preimages of $D(z,\eta_k/4)$ under $f_{\lambda_k}^{j}$ for $j\leq M_k$ cover $D(0,R)$, except for a set of Lebesgue measure at most $\delta_k$.

By taking $r_k$ sufficiently small, the Lebesgue measure of 
\[D(0,R)\setminus \bigcup_{j=0}^{M_k}f_{\lambda'}^{-j}D(z,\eta_k/4)\]
is less than $2\delta_k$ for all $\lambda'\in D(\lambda_k, r_k)$ and $f_{\lambda'}^{-j}$ has bounded distortion which inherits from that of $f_{\lambda_k}^{-j}$. Since components of $f_{\lambda_k}^{-j}D(z,\eta_k/4)$ intersecting with $D(0,R)$ has diameter at most $\varepsilon_k$ (by bounded distortion) and $\varepsilon_k\to 0$ as $k\to\infty$ (by expansion of $f_{\lambda_k}$ in the hyperbolic metric of $\c\setminus \p(f_{\lambda_k})$), components of $f_{\lambda'}^{-j}D(z,\eta_k/4)$ have diameter at most $2\varepsilon_k$.

Now suppose that $f_{\lambda}$ is dynamically non-recurrent. Then there exists a positive measure set $A$ such that there is a positive measure subset $B\subset A$ satisfying $f_{\lambda}^n(B)\cap B=\emptyset$, for all large $n$. Suppose that there exists $M>0$ such that $f_{\lambda}^n(B)\cap B=\emptyset$ for all $n\geq M$. It is enough to assume that $B\cap D(0,R)$ has positive measure. Otherwise, the following argument works for some preimage or image of $B$ under certain iterates. To see this, note first that it was proved, in \cite{cui-wang}, that $f_{\lambda}$ is ergodic and thus $\cup_{n\in\mathbb{Z}}f_{\lambda}^{n}(B)$ must have full Lebesgue measure. Thus for $R$ as above, there exists $n_0\in\mathbb{Z}$ so that $f_{\lambda}^{n_0}(B)\cap D(0,R)$ has positive Lebesgue measure. Then it is enough to work with $f_{\lambda}^{n_0}(B)\cap D(0,R)$.

\smallskip

With these information, the construction in \cite{cui-wang} gives a slowly escaping parameter $\lambda$. Since $\delta_k\to0$ as $k\to\infty$, we take a Lebesgue density point of $\alpha_1\in B\cap D(0,R)$ but $\alpha_1\not\in \bigcup_{z\in\p(f_{\lambda_k})}D(z,\eta_k)$. Therefore, the above discussion on the Lebesgue measure of preimages of $D(z,\eta_k/4)$ applies with $z=\alpha_1$, and thus  one can take another Lebesgue density point $\alpha_2\in B\cap D(0,R)$ so that $\alpha_2$ belongs to some component of $f_{\lambda_k}^{-p_{k_1}}D(\alpha_1, \eta_k/4)$ for some $p_{k_1}\leq M_k$. The above procedure applies to $\alpha_2$, there can be found a Lebesgue density point $\alpha_3\in B\cap D(0,R)$ so that $\alpha_3$ lies in some component of $f_{\lambda_k}^{-p_{k_2}}D(\alpha_1, \eta_k/4)$, where $p_{k_2}\leq M_k$. Repeating this for a finite number of times, say $m$, one can find a sequence of $p_{k_1}, p_{k_2},\dots, p_{k_m}$ such that $\sum_{j=1}^{m}p_{k_j}>M$. However, note that during each of these procedures, $\alpha_j$ is a Lebesgue density point and $f_{\lambda}$ has bounded distortion, this implies that a definite portion of $B$ returns to $B$ after $M$ iterates of $f_{\lambda}$. This is a contradiction.

\medskip

Motivated by these examples from Theorem \ref{thm1} and Theorem \ref{thm2} and also previous results of Urba\'nski-Zdunik and Hemke, one could naturally ask the following question.

\begin{que}
What is the optimal escaping speed of the singular value for an escaping exponential map $f_{\lambda}$ to be dynamically non-recurrent?
\end{que}

A related question concerning non-ergodicity of escaping exponential maps was posed in \cite[Question 5.12]{bergweiler25}. Hemke proved that the parabola condition is enough to ensure dynamical non-recurrence \cite[Corollary 6.1]{hemke3}.

The examples constructed in Theorem \ref{thm2} are both ergodic and recurrent. The former is the main result of \cite{cui-wang}. The following result tells that a non-ergodic escaping exponential map must be dynamically non-recurrent.

\begin{theorem}
Let $f_{\lambda}$ be escaping. Suppose that $f_{\lambda}$ is not ergodic. Then $f_{\lambda}$ is dynamically non-recurrent.
\end{theorem}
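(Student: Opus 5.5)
The plan is to argue by contraposition through Bock's dichotomy, Lemma~\ref{lem:bock-dichotomy}. Since $\lambda$ is escaping, the Julia set of $f:=f_\lambda$ is the whole plane. So exactly one of the following holds. Either alternative (a) holds: for every set $A$ of positive measure and almost every $z$, the orbit $f^{n}(z)$ visits $A$ infinitely often. Or alternative (b) holds: almost every orbit accumulates only on $\p(f)\cup\{\infty\}$. I will show that non-ergodicity rules out (a), and that (b) forces dynamical non-recurrence by the argument already used at the end of Section~2.

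\textbf{Step 1: (a) implies ergodicity.} Suppose (a) holds and let $X$ be a completely invariant measurable set, $f^{-1}(X)=X$, with $\meas X>0$ and $\meas(\c\setminus X)>0$. Apply (a) with $A:=X$ and $m=1$. Then almost every $z$ has $f^{n}(z)\in X$ for infinitely many $n$. But $z\in\c\setminus X$ implies $f^n(z)\in \c\setminus X$ for all $n$, because $f^{-1}(X)=X$ gives $f(\c\setminus X)\subset \c\setminus X$. This contradicts $\meas(\c\setminus X)>0$. Hence (a) forces ergodicity. By hypothesis $f$ is not ergodic, so (a) fails and (b) holds.

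\textbf{Step 2: (b) implies non-recurrence.} With (b), almost every $z\in\c=\mathcal J(f)$ satisfies $\omega(z)\subset\p(f)\cup\{\infty\}$. Because $\lambda$ is escaping, $\p(f)$ is a countable set (the orbit of $0$) with only $\infty$ as an accumulation point, so it is closed of measure zero. Choose a closed disk $E\subset\c\setminus\p(f)$ of positive area. Almost every point of $E$ visits $E$ only finitely often, since otherwise its orbit would accumulate in the compact set $E$, which is disjoint from $\p(f)\cup\{\infty\}$. Define
\[
E_m:=\{z\in E:\ m=\max\{n\ge0: f^n(z)\in E\}\}.
\]
Some $E_m$ has positive area, and $F:=f^m(E_m)\subset E$ has positive measure because $f$ is a local diffeomorphism, hence non-singular. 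Any $w\in F$ satisfies $f^n(w)\notin E\supset F$ for all $n\ge1$. Therefore $F\cap f^n(F)=\emptyset$ for all $n\ge1$, which is exactly dynamical non-recurrence.

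\textbf{Main obstacle.} The delicate point is Step 1: the failure of (a) must follow from non-ergodicity, with no other information about the map. It relies on reading the definition of ergodicity with respect to completely invariant sets, so that $f(\c\setminus X)\subset\c\setminus X$. If ergodicity is instead defined through forward-invariant sets, I would first pass to the completely invariant set $\bigcup_{n}f^{-n}(X)$ and check that both it and its complement still have positive measure. Step 2 is routine; it repeats the argument from Section~2, using only that pushforwards of positive-measure sets under $f^m$ have positive measure.
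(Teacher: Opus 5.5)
Your proof is correct and follows essentially the same route as the paper: you use Bock's dichotomy, with non-ergodicity ruling out alternative (a), and then the last-visit decomposition $E_m$ of a disk avoiding $\p(f)$ to produce a positive-measure set $F$ with $F\cap f^n(F)=\emptyset$. Two of your details are left implicit in the paper: your Step 1, which shows via a completely invariant set that alternative (a) forces ergodicity, and your observation that $f^m(E_m)$ has positive measure because $f$ is a local diffeomorphism.
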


\begin{proof}
This is essentially already in the proof of Theorem \ref{thm1}. By the result of Bock \cite{bock}, which is Lemma \ref{lem:bock-dichotomy}, $f:=f_{\lambda}$ satisfies $(b)$. Thus Lebesgue almost every point in the plane will accumulate only on the post-singular set $\p(f)$.

Since $f$ is escaping, let $D$ be a disk disjoint from $\p(f)$. So almost every point will not return to $D$ infinitely often. So for $z\in D$ except for a set of zero measure, we define
\[\alpha(z):=\max\{n\in\mathbb{N}:\, f^n(z)\in D\}.\]
Put
\[D_n=\{z\in D:\, \alpha(z)=n\}.\]
Then the sets $D_n$ cover $D$ up to the set of measure zero. This also implies that there exists some $m$ such that $D_m$ has positive measure. Put $D'=f^m(D_m)$. Then $D'$ is a subset of $D$ and has positive measure. By construction, for all $n\geq 1$,
\[D'\cap f^n(D')=\emptyset.\]
Therefore, $f$ is dynamically non-recurrent.
\end{proof}

Thus the following interesting question remains open.

\begin{que}
Is there an escaping exponential map which is ergodic but dynamically non-recurrent?
\end{que}

\section{Arbitrarily slowly escaping singular orbits}

In this section, we sketch how a parameter with arbitrarily slowly escaping singular orbits is obtained. This construction is similar to the one in the previous section, so we only sketch the construction.

We again start with the standard exponential map $f_{1}(z)=e^z$ (this is not necessary but only for the sake of convenience), and set $\lambda_0=1$. By Lemma \ref{dislarge}, for a given $r>0$, there exists $N_0\in\mathbb{N}$ such that $\xi_{N_0}(D(\lambda_0, r))$ has reached the large scale $S$ and $\xi_{N_0}$ has bounded distortion. Again, by taking $r$ small enough, we will have $\re\xi_{N_0}(\lambda_0)$ as large as we wish, so that $f_{\lambda_0}(D(\xi_{N_0}(\lambda_0), S/4))\supset D(\xi_{N_0+1}(\lambda_0), \pi/2)$.

Recall that we have a prescribed sequence $(r_n)$ such that $r_n\to\infty$ and $r_{n+1}\leq e^{r_n}+c$ for some $c>0$. We need to compare $\re\xi_{N_0+1}(\lambda_0)$ with $r_{N_0+1}$.

If $|\re\xi_{N_0+1}(\lambda_0)-r_{N_0+1}|\leq \pi/2$, then we are in good situation. In the following steps, we only need to consider squares whose centers have real parts equal to $r_n$ for $n>N_0+1$. More precisely, for a square centered on the vertical line $\{\re z=r_n\}$, the image under $f_{\lambda_0}$ is a half annulus in the right half-plane. The condition $r_{n+1}\leq e^{r_n}+c$ implies that this half annulus will intersect with the vertical line of the real part $r_{n+1}$ and thus we can choose a square whose center lies on $\{\re z=r_{n+1}\}$. Repeating this procedure gives us a parameter $\lambda$ with singular orbits tending to infinity but the escaping speed controlled by the sequence $(r_n)$. Then theorem holds by putting $n_0=N_0+1$.

If $\re\xi_{N_0+1}(\lambda_0)>r_{N_0+1}$, we can choose $N>N_0$. Then it takes at most $N-N_0$ iterates for a square at $\re\xi_{N_0+1}(\lambda_0)$ to cover a square with center at $\re z=r_N$. The rest of the construct is similar to the above. We put $n_0=N$.

If $\re\xi_{N_0+1}(\lambda_0)<r_{N_0+1}$, then we can continue to iterate $\xi_{N_0+1}(\lambda_0)$ several times, say $p$, such that $\re\xi_{N_0+1+p}(\lambda_0)>r_{N_0+1}$. Then we are at the second situation.

In any case, we can have an arbitrarily slowly escaping singular orbit.

\medskip

\begin{ack}
Cui was partially supported by NSFC (No. 12401105 and 12522108), Qingdao NSF (No. 24-4-4-zrjj-8-jch) and Shandong Provincial Natural Science Fund for Excellent Young Scientists Program (Overseas) (No. 2025HWYQ-021). Huang was partially supported by the NSFC (No.12201420 and 12231013) and the NSF of Guangdong Province (No. 2026A1515010851).  Wang was partially supported by the NSFC (No.12471072) and National Key R\&D Program of China Grant (No.2021YFA1003200).
\end{ack}

\bigskip

\noindent {\bf Weiwei Cui}\\
 Research Center for Mathematics and Interdisciplinary Sciences\\
Shandong University, Qingdao, 266237, China.

\smallskip

\noindent{weiwei.cui@sdu.edu.cn}

\bigskip

\noindent {\bf Jiaxing Huang}\\
School of Mathematical Sciences\\
 Shenzhen University, Guangdong, 518060, China

\smallskip

\noindent{hjxmath@szu.edu.cn}

\bigskip

\medskip

\noindent {\bf Jun Wang}\\
School of Mathematical Sciences\\
Fudan University, Shanghai, 200433, China.

\smallskip

\noindent{majwang@fudan.edu.cn}

\end{document}